\documentclass{amsart}

\usepackage[utf8]{inputenc}

\usepackage{amsmath}
\usepackage{amssymb}
\usepackage{amsfonts}
\usepackage{amsthm}

\usepackage{hyperref}

\usepackage{color}

\newtheorem{theorem}{Theorem}

\newtheorem{lemma}[theorem]{Lemma}
\newtheorem{proposition}[theorem]{Proposition}
\newtheorem{corollary}[theorem]{Corollary}
\newtheorem{problem}[theorem]{Problem}
\theoremstyle{definition}
\newtheorem{definition}[theorem]{Definition}

\def\R{\mathbb{R}}
\def\C{\mathbb{C}}
\def\N{\mathbb{N}}
\def\Z{\mathbb{Z}}
\def\Q{\mathbb{Q}}

\def\al{\alpha}

\def\la{\lambda}

\def\dd{\mathcal{D}}

\def\ll{\mathcal{L}}
\def\mm{\mathcal{M}}
\def\nn{\mathcal{N}}

\newcommand{\abs}[1]{\left|#1\right|}
\newcommand{\bo}[1]{\mathbf{#1}}

\def\boe{\bo{e}}
\def\bd{\bo{d}}

\def\bv{\bo{v}}
\def\bx{\bo{x}}
\def\by{\bo{y}}
\def\bz{\bo{z}}
\def\bw{\bo{w}}

\def\bnull{\bo{0}}

\title[Rational Matrices that admit finite digit representations]{Characterization of rational matrices that admit finite digit representations}

\author{J.~Jankauskas and J.~M.~Thuswaldner}
\address{Mathematik und Statistik, Montanuniversit\"at Leoben, Franz Josef Stra\ss{}e 18, A-8700 Leoben, Austria}
\email{jonas.jankauskas@gmail.com}
\email{joerg.thuswaldner@unileoben.ac.at}

\thanks{The post-doctoral position of the first author is supported by the Austrian Science Fund (FWF) project M2259 \emph{Digit Systems, Spectra and Rational Tiles} under the Lise Meitner Program. The second author is supported by FWF project P29910 \emph{Dynamics, Geometry and Arithmetics in Number Representations}.}

\subjclass[2010]{11A63, 11K16, 11R04, 11C20, 13G05, 15B36, 37A45} \keywords{Digit system, radix representation, lattice, expanding matrix}

\begin{document}

\begin{abstract}
Let $A$ be an $n \times n$ matrix with rational entries and let  
\[
\Z^n[A] := \bigcup_{k=1}^{\infty} \left( \Z^n + A\Z^n + \dots + A^{k-1}\Z^n\right)
\] 
be the minimal $A$-invariant $\Z$-module containing the lattice $\Z^n$. If $\dd\subset\Z^n[A]$ is a finite set we call the pair $(A,\dd)$ \emph{a  digit system}. We say that $(A,\dd)$ has  \emph{the finiteness property} if each $\bz \in \Z^n[A]$ can be written in the form
\[
\bz = \bd_0 + A\bd_1 + \dots + A^k\bd_k,
\]
with $k\in\N$ and \emph{digits} $\bd_j \in \dd$ for $0\le j\le k$. We prove that for a given matrix $A \in M_n(\Q)$ there is a finite set  $\dd\subset\Z^n[A]$ such that $(A, \dd)$ has the finiteness property if and only if $A$ has no eigenvalue of absolute value $< 1$. This result is the matrix analogue of \emph{the height reducing property} of algebraic numbers. In proving this result we also characterize integer polynomials $P \in \Z[x]$ that admit digit systems having the finiteness property in the quotient ring $\Z[x]/(P)$.
\end{abstract}

\maketitle

Let $A$ be an $n\times n$ integer matrix and let $\dd \subset \Z^n$ be finite. The pair 
$(A, \dd)$ is called \emph{a  digit system} in the lattice $\mathbb{Z}^n$. The matrix $A$ is called \emph{the base} of the digit system, the set $\dd$ is called its \emph{digit set}. The pair $(A, \dd)$ is said to have \emph{the finiteness property}, if every vector $\bz \in \Z^n$ can be written as a finite sum using only the digits from $\dd$, multiplied by non-negative powers of $A$, {\it i.e.}, if every vector $\bz \in \Z^n$ admits \emph{a radix representation}
\begin{equation}\label{expr}
\bz = \bd_0 + A\bd_1 + \dots + A^k\bd_k,
\end{equation}
with \emph{digits} $\bd_j \in \dd$ for $0\le j\le k$. Such representations, in general, need not be unique. If they are, we say that $(A,\dd)$ has \emph{the  unique representation property}. If one defines the set $\dd[A] \subset \R^n$ by
\begin{equation*}
\dd[A] := \{\bd_0 + A\bd_1 + \dots + A^k\bd_k \in \R^n, \bd_j \in \dd, 0 \leq j \leq k, \, j, k \in \Z\},
\end{equation*}
then the finiteness property of the pair $(A, \dd)$ can be restated as $\Z^n = \dd[A]$. 

For a ring $R\subset \mathbb{R}$ in all what follows we will denote $M_n(R)$ the set of $n\times n$ matrices with entries taken from $R$. Recall that a matrix of $M_n(R)$ is called \emph{expanding}, if each of its eigenvalues is strictly greater than $1$ in absolute value. In 1993 Vince \cite{Vin1, Vin2}  demonstrated that for each expanding integer matrix $A \in M_n(\Z)$, there exists a finite digit set $\dd$, consisting of integer vectors, such that $(A, \dd)$ is a digit system in $\Z^n$ with finiteness property (this result is essentially contained in \cite[Lemma~2]{Vin2}, although it was not stated in his paper explicitly in this form).  The basic underlying principle behind the finiteness property is the ultimate periodicity of the mapping $\Phi: \Z^n \to \Z^n$, $\Phi(\bx)=A^{-1}(\bx - \bd(\bx))$, where $\bd(\bx) \in \dd$ satisfies $\bx \equiv \bd \pmod{A\Z^n}$. Vince also noted that, when $A$ has at least one eigenvalue of absolute value $< 1$, then $(A, \dd)$ cannot have the finiteness property. Moreover, in \cite[Proposition~4]{Vin2} he showed that a digit system cannot possess the unique representation property unless $A$ is expanding. We refer to A.~Kov\'acs~\cite{KoLa}, where problems on this topic are formulated. Although we attribute the matrix version formulation to Vince, all basic principles were understood much earlier in the context of  number systems defined in orders of number fields.  With special emphasis on the unique representation property, these number systems were studied extensively by K{\'a}tai and Szab{\'o} \cite{KaSz}, K{\'a}tai and B.~Kov{\'a}cs \cite{KatKov1, KatKov2, KovB}, Gilbert \cite{Gi2, Gi8}, B.~Kov{\'a}cs and Peth\H{o} \cite{KovPet1, KovPet2}, Burcsi and A.~Kov\'acs~\cite{BK:08}, Akiyama and Rao~\cite{AkRa}, Scheicher~\cite{Sch}, and many others. More recently, the set of algebraic numbers $\al$ that admit number systems in $\Z[\al]$ with finiteness property was investigated and fully characterized in the series of papers \cite{AkDrJa, AkThZa2, AkThZa, AkiZai} by Akiyama and his co-authors. In this context the finiteness property is also known as \emph{the height reducing property} of the minimal polynomial of $\alpha$. We mention that the characterization of the unique representation property is far from being complete. 

In the present note, we extend Vince's results on the finiteness property in two directions: firstly, we deal with cases when $A$ has rational (not necessarily integer) entries; secondly, we deal with the situation when $A$ has eigenvalues $\abs{\la}=1$. 

Let $A\in M_n(\mathbb{Q})$. Notice that the lattice $\Z^n$ is no longer $A$-invariant when $A$ has non-integer entries. Thus for a rational matrix $A$ we cannot expect the representations of the form \eqref{expr} to lie in $\Z^n$. We make up for this by introducing the $A$-invariant $\Z$-module 
\begin{equation}\label{defZA}
\Z^n[A] := \bigcup_{k=1}^{\infty} \left( \Z^n + A\Z^n + \dots + A^{k-1}\Z^n\right)
\end{equation}
which turns out to be the natural space containing the digit representations in this setting. Clearly, $\Z^n[A]$ is the smallest $A$-invariant $\Z$-module that contains the lattice $\Z^n$. As the example $\Z[3/2]$ shows, in general the module $\Z^n[A]$ can no longer be regarded as a lattice in $\R^n$ when the entries of $A$ are not integers. Using the module $\Z^n[A]$ we now give the exact definition of the objects we are interested in.

\begin{definition}\label{def:rationalNS}
Let $A\in M_n(\mathbb{Q})$ and let $\dd\subset\Z^n[A]$ be finite. Then the pair $(A,\dd)$ is called \emph{a digit system} in $\Z^n[A]$ with \emph{base} $A$ and \emph{digit set} $\dd$. If $\dd[A]=\Z^n[A]$, {\it i.e.}, if each 
$\bz \in \Z^n[A]$ admits a finite \emph{radix representation}
\begin{equation}\label{expr2}
\bz = \bd_0 + A\bd_1 + \dots + A^k\bd_k,
\end{equation}
with digits $\bd_j \in \dd$ for $0\le j\le k$ we say that $(A, \dd)$ has \emph{the  finiteness property}. 
If each $\bz\in\Z^n[A]$ has a unique representation of the form \eqref{expr2} then $(A,\dd)$ is said to possess \emph{the unique representation property}.
\end{definition}

Using this definition we are able to state our main result.

\begin{theorem}\label{thmMain}
Let $A$ be an $n \times n$ matrix with rational entries. There is a digit set $\dd \subset \Z^n[A]$ that makes $(A, \dd)$ a digit system in $\Z^n[A]$ with finiteness property if and only if $A$ has no eigenvalue $\la$ with $\abs{\la} < 1$. The digit set $\dd$ can even be chosen to be a subset of $\Z^n$.
\end{theorem}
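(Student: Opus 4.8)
The plan is to prove the two implications separately, with the ``only if'' part being short and the ``if'' part carrying essentially all of the work.

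For necessity I would argue by contradiction. Suppose $(A,\dd)$ has the finiteness property and $\la$ is an eigenvalue with $\abs{\la}<1$; let $\bv$ be a corresponding left eigenvector (possibly with complex entries), so that $\bv A^{j}=\la^{j}\bv$ for all $j\ge 0$. Applying the linear functional $\bv$ to a radix representation $\bz=\bd_0+A\bd_1+\dots+A^k\bd_k$ gives
\[
\bv\bz=\sum_{j=0}^{k}\la^{j}\,\bv\bd_j ,
\]
so that $\abs{\bv\bz}\le \frac{1}{1-\abs{\la}}\max_{\bd\in\dd}\abs{\bv\bd}$ for every $\bz\in\Z^n[A]$, because $\dd$ is finite. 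On the other hand, since $\bv\neq\bnull$ some entry $v_i$ is nonzero, and evaluating on $t\,\boe_i\in\Z^n\subseteq\Z^n[A]$ for $t\in\Z$ shows that $\bv$ takes arbitrarily large values on $\Z^n[A]$. This contradiction rules out eigenvalues of absolute value $<1$.

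For sufficiency I would assume $A$ has no eigenvalue in the open unit disk and construct a finite $\dd\subset\Z^n$. The key structural observation is that $\Z^n[A]$ is a finitely generated $\Z[x]$-module on which $x$ acts as $A$ (generated by the standard basis vectors), so the finiteness property is the statement that a fixed finite set $\dd$ satisfies $\Z^n[A]=\sum_{j\ge0}A^{j}\dd$ with finite sums. I would first reduce to the cyclic case by passing to the rational canonical form of $A$: up to the $\Z[x]$-module structure this splits the problem into companion blocks attached to the invariant factors, i.e.\ into the polynomial model $\Z[x]/(P)$ where $P$ is an integer polynomial whose roots are among the eigenvalues of $A$ and hence avoid the open unit disk. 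This is exactly the polynomial characterization advertised in the abstract. Here I expect the first real technical point: rational similarity does not preserve $\Z^n$, so the reduction must be carried out at the level of $\Z$-modules, and one must check that constructing digits for each block, together with the finiteness of products of finite sets, yields a single finite digit set for $\Z^n[A]$; one then verifies the digits can be taken inside $\Z^n$.

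The heart of the argument is the polynomial case, which I would settle through a height-reducing procedure in the spirit of the height reducing property of algebraic numbers. The idea is to produce a finite digit set $\dd$ and a height bound $H$ so that any element $\bz$ of height exceeding $H$ can be written as $\bz=\bd+A\bz'$ with $\bd\in\dd$ and the height of $\bz'$ strictly smaller; iterating drives every element into a fixed finite set, which one then checks is representable, yielding the finiteness property. The hypothesis that no eigenvalue lies in the open unit disk is precisely what makes the ``division by the base'' non-contracting and hence makes the height genuinely decrease. I expect the main obstacle to be the two features that go beyond Vince's expanding integer setting: the eigenvalues of modulus exactly $1$, where the reduction is not strictly contracting in the neutral directions and a naive single step need not decrease the height, so one must argue more carefully (for instance with a well-chosen norm or a multi-step reduction that treats the unimodular directions separately); and the non-monic/denominator phenomenon coming from rational entries, where $x$ is not literally invertible on $\Z[x]/(P)$ and powers of the leading coefficient of $P$ must be tracked throughout. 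Overcoming these two points, and then threading the resulting digit set back through the canonical-form reduction of the previous step, completes the proof.
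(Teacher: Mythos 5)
Your necessity argument coincides with the paper's (left eigenvector $\bv^T A = \la\bv^T$, geometric-series bound on $\abs{\bv^T\bz}$, unboundedness of $\bz\mapsto\bv^T\bz$ on $\Z^n\subset\Z^n[A]$), and your outer reduction for sufficiency also matches its architecture: rational canonical form, companion blocks, block-sum assembly (Proposition~\ref{propBlock}), and a repair for the fact that rational similarity does not preserve $\Z^n$ (the paper scales the conjugating matrix $T$ to be integral and enlarges the digit set by coset representatives of $\Z^n/T\Z^n$). The genuine gap is at what you yourself call the heart: the polynomial statement $\Z[x]=\nn[x]+(P)$. The paper does not prove this by any new descent. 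For irreducible $P$ it imports the result wholesale from Akiyama, Thuswaldner and Za{\"i}mi \cite{AkThZa} (the height reducing property, restated as Proposition~\ref{ATZ}), and then extends it to arbitrary $P$ by an elementary product lemma (Lemma~\ref{lemMult}: $\Z[x]=\ll[x]+(P)$ and $\Z[x]=\mm[x]+(Q)$ imply $\Z[x]=\nn[x]+(PQ)$), whose iteration (Corollary~\ref{lemMult2}) handles the prime powers $P_j^{m_j}$ occurring in the primary Frobenius form; Lemma~\ref{mainIrred} then transfers the polynomial result to the companion matrix via $\Z^d[C]=\Z[C]\boe_1+C\Z[C]\boe_1+\dots+C^{d-1}\Z[C]\boe_1$, which also disposes of your non-monic/denominator worry structurally, without ever inverting $x$.

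Your proposed substitute for this core --- a height reduction made to work at modulus one ``with a well-chosen norm or a multi-step reduction that treats the unimodular directions separately'' --- is not a compressed proof but the missing proof: this is exactly the hard case. If $P$ has a root $\al$ with $\abs{\al}=1$ that is not a root of unity (by Kronecker's theorem such $\al$ cannot be an algebraic integer, but rational matrices produce them, e.g.\ $P(x)=5x^2-6x+5$ with roots $(3\pm 4i)/5$), then in every archimedean embedding multiplication by $\al^{-1}$ is an isometry in the neutral directions, so no norm and no bounded number of steps can force strict height decrease; relatedly, the backward division map has infinite orbits, as the paper's shear-matrix example already illustrates, so the classical ultimate-periodicity argument of Vince also fails. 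The known proofs in this regime are the content of the series \cite{AkDrJa, AkThZa2, AkThZa, AkiZai} and rest on substantially heavier arithmetic input than norm descent; a correct write-up along your lines would either have to reproduce that work or, as the paper does, cite it. The rest of your outline (necessity, block decomposition, integrality of the digits, coset correction) is sound and essentially the paper's proof, but as written the central polynomial claim is asserted, not established.
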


While Vince \cite{Vin2} derives his finiteness result by recasting the argument of the ultimate periodicity from the digit systems \cite{Gi2, Gi8, KatKov1, KatKov2,  KaSz, KovB, KovPet1} in the matrix form, our case is more subtle. The reason for this is the existence of infinite orbits of points $\bx \in \Z^n$ under the action of $A^{-1}$, when the matrix $A$ possesses Jordan blocks of the orders $\geq 2$ corresponding to eigenvalues $\la$ with $\abs{\la}=1$.  This can be easily seen, for instance, by taking $A$ to be the shear matrix:
\[
A = \begin{pmatrix}
1 & 1\\
0 & 1
\end{pmatrix}.
\]
Note that for $\bx=(x_1, x_2)^T$, $A^{-n}\bx = (x_1-nx_2, x_2)$, so that $\Phi^{n}(\bx)$ might ultimately diverge if $x_2\ne 0$.

We deal with this by decomposing $\Z^n[A]$ into simpler submodules and constructing digit systems with finiteness property there first. We are going to prove Theorem~\ref{thmMain} by building on a result proved by Akiyama~{\it et al.}~\cite{AkThZa} for number systems in orders of number fields. Before stating it in the form that is convenient for our applications, let us digress into some definitions and notation from polynomial rings.

Let $P \in \Z[x]$ be a polynomial in a single variable $x$ with integer coefficients. Similarly to \eqref{expr}, for a finite subset $\nn \subset \Z$, we define
\begin{equation*}
\nn[x] :=\{ d_0+d_1x+\dots+ d_k x^k, d_j \in \nn, 0 \leq j \leq k, \, j, k \in \Z\}, 
\end{equation*} 
and call the pair $(P, \nn)$ \emph{a digit system} in the ring $\Z[x]/(P)$ with \emph{a digit set} $\nn$. We say that $(P,\nn)$ has \emph{the finiteness property} if 
\[
\Z[x]=\nn[x] + (P),
\]
{\it i.e.}, if each residue class of $\Z[x]/(P)$ has a representative in $\nn[x]$.

With all this in mind, we can now recast the main result of \cite{AkThZa} in the polynomial form as follows.

\begin{proposition}\label{ATZ}
Let $P\in\Z[x]$, $\deg{P} \geq 1$, be irreducible in $\Z[x]$. Then
$\Z[x] = \nn[x] + (P)$ holds for some finite set $\nn \subset \Z$ if and only if $P$ has no root $\alpha$ with $|\alpha|<1$.
\end{proposition}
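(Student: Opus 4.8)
The plan is to translate the polynomial statement into the language of number systems in the ring $\Z[\alpha]$ generated by a root $\alpha$ of $P$, and then to invoke the main theorem of \cite{AkThZa} as a black box. First I would fix a root $\alpha\in\C$ of $P$ and consider the evaluation homomorphism $\mathrm{ev}_\alpha\colon \Z[x]\to\C$, $f\mapsto f(\alpha)$, whose image is $\Z[\alpha]$. The key preliminary claim is that its kernel is exactly the ideal $(P)$. Since $\deg P\ge 1$ and $P$ is irreducible in $\Z[x]$, Gauss's lemma shows $P$ is primitive and is, up to sign, the primitive integer polynomial proportional to the minimal polynomial of $\alpha$ over $\Q$; any $f\in\Z[x]$ vanishing at $\alpha$ is divisible by that minimal polynomial in $\Q[x]$, hence by $P$ in $\Z[x]$, again by Gauss's lemma. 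Thus $\mathrm{ev}_\alpha$ induces a ring isomorphism $\Z[x]/(P)\cong\Z[\alpha]$. I must stress that $P$ is allowed to be non-monic, so $\alpha$ is in general an algebraic number rather than an algebraic integer; this is precisely the generality in which the cited result is stated.

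Under this isomorphism the set $\nn[x]$ maps onto $\nn[\alpha]:=\{d_0+d_1\alpha+\dots+d_k\alpha^k : d_j\in\nn\}$, and because the kernel of $\mathrm{ev}_\alpha$ is $(P)$, the condition $\Z[x]=\nn[x]+(P)$ is equivalent to $\Z[\alpha]=\nn[\alpha]$, i.e.\ to the statement that every element of $\Z[\alpha]$ admits a representation as a polynomial in $\alpha$ with digits from the finite set $\nn$. This is exactly the height reducing (finiteness) property of the algebraic number $\alpha$. Moreover, as $P$ is irreducible over $\Q$, its roots form a single Galois orbit and are precisely the conjugates of $\alpha$, so the arithmetic hypothesis ``$P$ has no root $\beta$ with $|\beta|<1$'' coincides with ``$\alpha$ has no conjugate of absolute value less than $1$.''

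With these identifications both directions follow. For necessity I would argue directly, mirroring Vince's observation: if some conjugate $\beta$ of $\alpha$ satisfies $|\beta|<1$, the assignment $x\mapsto\beta$ induces a homomorphism $\sigma\colon\Z[\alpha]\to\C$, and every element of $\nn[\alpha]$ has image of modulus at most $B/(1-|\beta|)$, where $B:=\max_{d\in\nn}|d|$. Hence $\sigma(\nn[\alpha])$ is bounded, whereas $\sigma(\Z[\alpha])$ contains $\sigma(\Z)=\Z$, which is unbounded; therefore $\Z[\alpha]\neq\nn[\alpha]$ for every finite $\nn$. For sufficiency, the deep direction, I would simply cite the main theorem of \cite{AkThZa}, which furnishes a finite digit set $\nn\subset\Z$ with $\Z[\alpha]=\nn[\alpha]$ whenever $\alpha$ has no conjugate of absolute value $<1$; translating back through the isomorphism yields the desired finite $\nn$ with $\Z[x]=\nn[x]+(P)$.

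The routine part is the ring-theoretic translation; the only genuine care is needed in the non-monic case, where one must confirm both that the kernel computation survives (handled by primitivity and Gauss's lemma) and that the cited characterization is indeed stated for algebraic \emph{numbers} and not merely algebraic integers. The substantive mathematics, namely the sufficiency direction, is imported wholesale from \cite{AkThZa}, so the main obstacle is essentially expository: ensuring that the correspondence between representations in $\Z[x]/(P)$ and in $\Z[\alpha]$ is faithful enough that the two finiteness properties match verbatim.
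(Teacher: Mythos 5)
Your proposal is correct and follows essentially the same route as the paper: both use Gauss's Lemma and primitivity of the irreducible $P$ to identify the kernel of the evaluation map $x\mapsto\alpha$ with $(P)$, obtain the isomorphism $\Z[x]/(P)\cong\Z[\al]$, observe that $\Z[x]=\nn[x]+(P)$ translates verbatim into the height reducing property $\Z[\al]=\nn[\al]$, and then import the characterization from \cite{AkThZa}. The only (harmless) difference is that you reprove the necessity direction directly via a conjugate $\beta$ with $\abs{\beta}<1$, whereas the paper simply relies on the full ``if and only if'' of the cited theorem.
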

\begin{proof}[Proof of Proposition \ref{ATZ}]
Since $P \in \Z[x]$ is not a constant, it has a root $\al \in \C$.  The irreducibility of $P$ in $\Z[x]$ implies that the g.c.d.\ of the coefficients of $P$ is $1$. Therefore, by Gauss Lemma (see, for instance, \cite{Anderson:00}), the kernel of the evaluation mapping $x \mapsto \al$ is the principal ideal $(P)$. Then, by the First Isomorphism Theorem, the ring $\Z[\al]$ (the image of a ring $\Z[x]$ under the evaluation mapping) satisfies $\Z[\al] \cong \Z[x]/(P)$, {\it i.e.}, each element of $Q(\al) \in \Z[\al]$ corresponds to a residue class $Q(x) + (P) \in \Z[x]/(P)$. In particular, it is readily seen that $\Z[\al]=\nn[\al]$ is equivalent to the finiteness property $\Z[x] = \nn[x]+(P)$. 
\end{proof}

Van de Woestijne and his coauthors~\cite{SSTW, Woe} consider products of digit systems. The following Lemma is simpler than their results in the sense that we do not care about the size of the digit set used in the product and we only think about integer digits. 

\begin{lemma}\label{lemMult}
Let $P,Q \in \Z[x]$ and let $\ll,\mm\subset\Z$ be finite sets. If  $\Z[x]$ satisfies $\Z[x] = \ll[x]+(P)$ and $\Z[x]=\mm[x]+(Q)$, then there exists a finite set $\nn\subset \Z$, such that $\Z[x]  = \nn[x]+(PQ)$.
\end{lemma}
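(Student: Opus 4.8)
The plan is to reduce modulo $P$ and modulo $Q$ in two successive steps, and then to observe that the representative produced this way has integer coefficients confined to an explicit finite set, which will serve as $\nn$. By symmetry of $PQ=QP$ the order of the two reductions is immaterial.

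First, given an arbitrary $g\in\Z[x]$, I would invoke the hypothesis $\Z[x]=\ll[x]+(P)$ to write $g=\ell+Pm$ with $\ell\in\ll[x]$ and $m\in\Z[x]$. The summand $\ell$ already has all its digits in $\ll$, but the ``carry'' $Pm$ must still be brought under control modulo $PQ$. To this end I would apply the second hypothesis $\Z[x]=\mm[x]+(Q)$ to the carry polynomial $m$, obtaining $m=\mu+Qr$ with $\mu\in\mm[x]$ and $r\in\Z[x]$. Substituting back gives $g=\ell+P\mu+PQr$, so that $g\equiv \ell+P\mu \pmod{PQ}$ for a representative $\ell+P\mu$ built only from the two fixed digit sets and the fixed polynomial $P$.

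The crucial point, and essentially the only genuine content of the argument, is that the coefficients of $\ell+P\mu$ lie in a finite set independent of $g$. Writing $P=\sum_t p_t x^t$, $\ell=\sum_s a_s x^s$ with $a_s\in\ll$, and $\mu=\sum_i b_i x^i$ with $b_i\in\mm$, the coefficient of $x^s$ in $\ell+P\mu$ equals $a_s+\sum_t p_t b_{s-t}$, where $a_s$ ranges over $\ll\cup\{0\}$ and each $b_{s-t}$ ranges over $\mm\cup\{0\}$. Since $\deg P$ is fixed and both $\ll,\mm$ are finite, this quantity assumes only finitely many integer values, because the convolution coefficients of $P\mu$ stay bounded. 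I would therefore define $\nn:=(\ll\cup\{0\})+\{\sum_t p_t f_t : f_t\in\mm\cup\{0\}\}$, a finite subset of $\Z$ containing $0$; the presence of $0$ guarantees that $\ell+P\mu\in\nn[x]$ no matter which intermediate coefficients happen to vanish. This yields $g\equiv\ell+P\mu\in\nn[x]\pmod{PQ}$ for every $g\in\Z[x]$, hence $\Z[x]=\nn[x]+(PQ)$, as required. I expect no real obstacle beyond the bookkeeping needed to check that the carry produced by multiplying $\mu$ by $P$ stays within a finite range.
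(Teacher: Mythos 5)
Your proof is correct and follows essentially the same route as the paper: reduce $g$ modulo $P$ to get $g=\ell+Pm$, reduce the quotient $m$ modulo $Q$, and observe that the representative $\ell+P\mu$ modulo $PQ$ has coefficients in a finite set determined by $\ll$, $\mm$ and $P$. Your explicit description of $\nn$ as $(\ll\cup\{0\})+\{\sum_t p_t f_t : f_t\in\mm\cup\{0\}\}$, including the attention to the zero digit, is a slightly more careful rendering of the bound the paper states informally, but the argument is the same.
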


\begin{proof}[Proof of Lemma \ref{lemMult}] 

Let $S$ be an arbitrary element of $\Z[x]$. By our assumptions we can choose a polynomial $R_1 \in \ll[x]$ in a way that $S = R_1 + S_1P$ holds for some $S_1  \in \Z[x]$. After that, we can choose a polynomial $R_2 \in \mm[x]$ such that, for some $S_2\in \Z[x]$, $S_1 = R_2 + S_2Q$, which yields
\[
S = R_1 + (R_2 + S_2Q)P = R_1 + R_2P + S_2PQ.
\]
This means $S \equiv R_1 + R_2 P  \pmod{PQ}$.
The coefficients of the polynomials $R_1,R_2$ are contained in finite sets $\ll$ and $\mm$, respectively. Since $P$ is a fixed polynomial, the set of  possible coefficients of the polynomial $R_1+R_2P$ is bounded and therefore is contained in a finite set $\nn$ depending only on $\ll$, $\mm$,  and the polynomial $P$. This implies that each $S\in \Z[x]$ has a representative in $\nn[x]$ modulo $PQ$.
\end{proof}

By induction, Lemma~\ref{lemMult} implies the following corollary.

\begin{corollary}\label{lemMult2}
Let $P \in \Z[x]$ and let $\nn\subset\Z$ be a finite set. If\, $\Z[x] = \nn[x] + (P)$, then, for every $m \in \N$, there exists a finite set $\nn_m \subset \Z$, such that $\Z[x]  = \nn_m[x] + (P^m)$.
\end{corollary}

Moreover, we gain the following generalization of Proposition~\ref{ATZ} to arbitrary polynomials over $\Z$. 

\begin{theorem}\label{ATZ2}
Let $P\in\Z[x]$. Then $\Z[x] = \nn[x] + (P)$ holds for some finite set $\nn \subset \Z$ if and only if $P$ has no root $\alpha$ with $|\alpha|<1$.
\end{theorem}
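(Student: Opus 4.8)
The plan is to reduce the general statement to the already-established cases by factoring $P$ in the UFD $\Z[x]$ and then assembling the factors with the multiplicative machinery of Lemma~\ref{lemMult} and Corollary~\ref{lemMult2}. Concretely, I would write $P = a\,P_1^{e_1}\cdots P_r^{e_r}$, where $a\in\Z\setminus\{0\}$ is the constant part (the content together with a sign) and the $P_j$ are the distinct non-constant primitive irreducible factors. Since the ideal $(P)$ is unchanged under multiplication by a unit, only $|a|$ and the $P_j^{e_j}$ matter for the finiteness property. The observation that makes the reduction work is that every root of every factor $P_j$ is a root of the product $P$; hence if $P$ has no root $\alpha$ with $|\alpha|<1$, then neither does any $P_j$.

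For the \emph{sufficiency} direction I would proceed as follows. Each non-constant factor $P_j$ is irreducible and inherits the root condition, so Proposition~\ref{ATZ} yields a finite set $\nn_j\subset\Z$ with $\Z[x]=\nn_j[x]+(P_j)$; Corollary~\ref{lemMult2} then upgrades this to $\Z[x]=\nn_j'[x]+(P_j^{e_j})$ for a finite set $\nn_j'$. For the constant factor the claim is elementary: reducing each coefficient modulo $|a|$ shows $\Z[x]=\{0,1,\dots,|a|-1\}[x]+(a)$. Finally, applying Lemma~\ref{lemMult} inductively across the factors $a, P_1^{e_1},\dots,P_r^{e_r}$ (using $(FG)=(F)(G)$ for principal ideals) produces a single finite set $\nn$ with $\Z[x]=\nn[x]+(P)$.

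For the \emph{necessity} direction a direct boundedness argument is cleanest, and it avoids having to track which irreducible factor carries the small root. Suppose $P$ has a root $\alpha$ with $|\alpha|<1$ and, for contradiction, that $\Z[x]=\nn[x]+(P)$ with $\nn$ finite. For any integer $N$, viewed as a constant polynomial, write $N=R+TP$ with $R\in\nn[x]$ and $T\in\Z[x]$; evaluating at $\alpha$ and using $P(\alpha)=0$ gives $N=R(\alpha)$. But with $M:=\max_{d\in\nn}|d|$ one has $|R(\alpha)|\le M\sum_{j\ge0}|\alpha|^j=M/(1-|\alpha|)$, a bound independent of $N$, which is impossible since $N$ ranges over all of $\Z$.

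The genuinely hard analytic and arithmetic content is hidden in Proposition~\ref{ATZ} (the theorem of Akiyama \emph{et al.}), so the remaining work is essentially organizational; the one point demanding care is the correct bookkeeping of the factorization in $\Z[x]$, in particular isolating the constant (content) factor, which contributes no roots and therefore needs its own elementary base case rather than an appeal to Proposition~\ref{ATZ}. I would also note in passing the degenerate cases $P=\pm1$, where $(P)=\Z[x]$ and the property holds trivially, and $P=0$, where the property fails while $P$ has roots of every modulus, so that the equivalence still holds at both ends.
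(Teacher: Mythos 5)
Your proposal is correct, and its sufficiency half is essentially the paper's own argument: factor $P$ in the UFD $\Z[x]$, handle the constant part by reducing coefficients modulo $\abs{a}$, apply Proposition~\ref{ATZ} to each non-constant irreducible factor, lift to prime powers via Corollary~\ref{lemMult2}, and assemble everything with Lemma~\ref{lemMult}; your bookkeeping (pulling out the content as a single constant $a$, rather than letting prime constants appear as irreducible factors in the factorization) is an immaterial variant of the paper's. Where you genuinely diverge is necessity. The paper argues structurally: for $P \neq 0$ it takes any irreducible divisor $Q$ of $P$, notes $(P)\subset(Q)$ so that $\Z[x]=\nn[x]+(P)$ implies $\Z[x]=\nn[x]+(Q)$, and then quotes the `only if' part of Proposition~\ref{ATZ} (with a separate remark for constant $Q$). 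You instead argue analytically: writing an arbitrary integer $N$ as $N=R+TP$ with $R\in\nn[x]$ and evaluating at a root $\alpha$ with $\abs{\alpha}<1$ gives $\abs{N}=\abs{R(\alpha)}\le M/(1-\abs{\alpha})$ with $M:=\max_{d\in\nn}\abs{d}$, contradicting the unboundedness of $\Z$. Your route is more self-contained: it needs no factorization at all in this direction, it does not have to track which irreducible factor carries the small root, it covers $P=0$ uniformly (as you note), and it is precisely the geometric-series obstruction the paper itself deploys later in the proof of Theorem~\ref{thmMain}, with a left eigenvector $\bv^T$ playing the role of evaluation at $\alpha$. The paper's reduction, by contrast, keeps all content quarantined in Proposition~\ref{ATZ} and makes the theorem formally a corollary of the irreducible case in both directions; but nothing is lost by your shortcut, since the necessity direction of Proposition~\ref{ATZ} is itself essentially this boundedness argument in disguise. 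Your explicit treatment of the degenerate cases $P=\pm1$ and $P=0$ is a small point of care that the paper leaves implicit.
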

\begin{proof}[Proof of Theorem~\ref{ATZ2}] We begin by showing sufficiency. First, assume that $P$ is a nonzero constant polynomial, \emph{i.e.}, $P=c\in \Z$, $c \ne 0$. Let $\nn \subset \Z$ be a complete set of representatives of the residue classes $\pmod{c}$. It is clear that one can write any $S \in \Z[x]$ as $S = R + cQ$, $R \in \nn[x]$, $Q \in \Z[x]$, therefore the statement is true in this case. If $\deg P \ge 1$, then, by unique factorization in $\Z[x]$, one can express $P=\prod_{j=1}^{k}P_j^{m_j}$, where each $P_j \in \Z[x]$ is irreducible and has no root $\al \in \C$ with $\abs{\al} < 1$. By combining previous remark about constant polynomials, Proposition~\ref{ATZ}, Lemma~\ref{lemMult}, and Corollary~\ref{lemMult2}, we obtain that there exists a finite set $\nn \subset \Z$, such that $\Z[x]=\nn[x]+(P)$. For the necessity, suppose that $\Z[x]=\nn[x]+(P)$ holds. Let $Q \in \Z[x]$ be any irreducible divisor of $P \ne 0$. Since $(P) \subset (Q)$, $\Z[x]=\nn[x]+(Q)$. Then either $Q=c \ne 0$, or the `only if' part of Proposition \ref{ATZ} applies. In both cases, $Q$ has no root $\al$ with $\abs{\al}<1$.
\end{proof}

Recall that the companion matrix $C(Q)$ of a polynomial
\[
Q(x) = a_dx^d + a_{d-1}x^{d-1}+\dots + a_1x +a_0 \in \Z[x]
\] of degree $d := \deg P \geq 1$ is defined by
\begin{equation*}
C(Q) = \begin{pmatrix}
0 & 0 & \dots & 0& 0 & -a_0/a_d\\
1 & 0 & \dots & 0& 0 & -a_1/a_d\\
0 & 1 & \dots & 0& 0 & -a_2/a_d\\
\vdots & \vdots & \ddots &\vdots & \vdots & \vdots\\
0 & 0 & \dots & 1& 0 & -a_{d-2}/a_d\\
0 & 0 & \dots & 0& 1 & -a_{d-1}/a_d\\
\end{pmatrix}.
\end{equation*}
One can see easily that $Q=a_d \chi_{_C}(x)$, where $\chi_{_C}(x)$ denotes the characteristic polynomial of $C(Q)$. Before proving a full matrix version of Theorem \ref{ATZ2}, we state an intermediate result.

\begin{lemma}\label{mainIrred}
Let $P \in \Z[x]$, $d=\deg{P} \geq 1$, and let $C=C(P)$ be the companion matrix of $P$. If $P$ has no root $\al$ with $\abs{\al} < 1$, then there exists a finite set of vectors $\dd \subset \Z^d$ such that $(C,\dd)$ is a digit system with finiteness property.
\end{lemma}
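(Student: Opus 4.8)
The plan is to transport the polynomial statement of Theorem~\ref{ATZ2} to the companion matrix $C$ by means of the natural module homomorphism that sends multiplication by $x$ to multiplication by $C$. First I would note that the hypothesis of the lemma — that $P$ has no root $\al$ with $\abs{\al}<1$ — is exactly the condition required by Theorem~\ref{ATZ2}, which therefore furnishes a finite set $\nn \subset \Z$ with $\Z[x] = \nn[x] + (P)$. (No discussion of the eigenvalues of $C$ is needed, since the hypothesis already concerns the roots of $P$ directly.) The whole task is then to convert this polynomial identity into the assertion $\dd[C] = \Z^d[C]$ for a suitable finite $\dd \subset \Z^d$.

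To this end I would introduce the map $\psi \colon \Z[x] \to \Q^d$, $\psi(S) := S(C)\boe_1$, where $\boe_1, \dots, \boe_d$ is the standard basis and $S(C)$ denotes substitution of $C$ for $x$. This $\psi$ is $\Z$-linear and, because $S \mapsto S(C)$ is a ring homomorphism into $M_d(\Q)$, it intertwines the two actions: $\psi(xS) = C\psi(S)$, so that $\psi(x^j) = C^j\boe_1$. The decisive feature of the companion matrix enters here: since $C\boe_i = \boe_{i+1}$ for $1 \le i \le d-1$, one has $\boe_i = C^{i-1}\boe_1$, i.e. $\boe_1$ is a cyclic generator. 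From this I would deduce that $\psi(\Z[x])$, the $\Z$-span of $\{C^j\boe_1 : j \ge 0\}$, coincides with $\Z^d[C]$: each $C^j\boe_1$ lies in $C^j\Z^d \subseteq \Z^d[C]$, while conversely every generator $C^j\boe_i$ of $\Z^d[C]$ equals $C^{j+i-1}\boe_1$ and so lies in the image. Thus $\psi$ maps $\Z[x]$ onto $\Z^d[C]$.

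Next I would observe that $(P) \subseteq \ker\psi$: for any $T \in \Z[x]$ we have $\psi(PT) = P(C)T(C)\boe_1 = \bnull$, since $P = a_d\,\chi_{_C}$ and $\chi_{_C}(C) = \bnull$ by the Cayley--Hamilton theorem. Applying the surjection $\psi$ to the identity $\Z[x] = \nn[x] + (P)$ then gives $\Z^d[C] = \psi(\Z[x]) = \psi(\nn[x])$. Finally I would set $\dd := \{\, d\,\boe_1 : d \in \nn \,\} \subset \Z^d$, which is finite and genuinely lies in $\Z^d$, and check that $\psi(\nn[x]) = \dd[C]$: a polynomial $\sum_j d_j x^j \in \nn[x]$ is sent to $\sum_j d_j C^j\boe_1 = \sum_j C^j(d_j\boe_1)$, a radix expansion in base $C$ with digits $d_j\boe_1 \in \dd$, and every element of $\dd[C]$ arises in this way. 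Combining the last two identities yields $\dd[C] = \Z^d[C]$, which is precisely the finiteness property of $(C,\dd)$.

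The step I expect to be the crux is the identification $\psi(\Z[x]) = \Z^d[C]$ together with the matching $\psi(\nn[x]) = \dd[C]$: both rest on the cyclic structure of the companion matrix, which makes $\boe_1$ a generator so that placing scalar integer digits on $\boe_1$ alone already reproduces the full vector module $\Z^d[C]$. The remaining points — $\Z$-linearity of $\psi$, the inclusion $(P) \subseteq \ker\psi$ via Cayley--Hamilton, and the finiteness of $\dd$ — are routine.
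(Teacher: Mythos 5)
Your proof is correct and follows essentially the same route as the paper: invoke Theorem~\ref{ATZ2} to obtain $\Z[x]=\nn[x]+(P)$, kill the ideal $(P)$ via $P(C)=a_d\,\chi_{_C}(C)=0$, and use the cyclic relation $\boe_i=C^{i-1}\boe_1$ to identify $\Z^d[C]$ with $\Z[C]\boe_1$, placing all digits on the generator $\boe_1$. The one small difference is a streamlining: by collapsing $\Z[C]\boe_1+C\Z[C]\boe_1+\dots+C^{d-1}\Z[C]\boe_1$ into $\Z[C]\boe_1$ \emph{before} substituting $\nn[C]$ for $\Z[C]$, you avoid the paper's enlargement of $\nn$ to the set $\mm$ absorbing $d$-fold sums, and so get away with the smaller digit set $\dd=\nn\boe_1$ --- a valid and slightly sharper conclusion, of mild interest for Problem~\ref{prob2}.
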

\begin{proof}[Proof of Lemma \ref{mainIrred}]
In view of \eqref{defZA}, each  vector $\bz \in \Z^d[C]$ can be written as
\[
\bz = Q_1(C)\boe_1 + \dots + Q_d(C)\boe_d, \text{ for some } Q_j \in \Z[x], 1 \leq j \leq d.
\]
Therefore,
\[
\begin{split}
\Z^d[C] &=\Z[C]\boe_1+\Z[C]\boe_2+\dots+\Z[C]\boe_d=\\
&= \Z[C]\boe_1+C\Z[C]\boe_1+\dots+C^{d-1}\Z[C]\boe_1.
\end{split}
\]
By Theorem~\ref{ATZ2}, there exists a finite set $\nn\subset\Z$ such that 
\[
\Z[x]=\nn[x] +(P). 
\]
Since $P(C)=0$, the substitution $x\mapsto C$ yields $\Z[C]=\nn[C]$.
Hence,
\begin{equation}\label{eq:zcc}
\begin{split}
\Z^d[C] &=\nn[C]\boe_1+C\nn[C]\boe_1+\dots+C^{d-1}\nn[C]\boe_1=\\
&=(\nn[C]+C\nn[C]+\dots+C^{d-1}\nn[C])\boe_1.
\end{split}
\end{equation}
As $\nn \subset \Z$ is finite, one can always find a finite set $\mm \subset \Z$, such that
\[
\nn[x]+ x\nn[x] + \dots + x^{d-1}\nn[x] \subset \mm[x].
\]
Together with \eqref{eq:zcc} this yields
$\Z^d[C] \subset \mm[C]\boe_1$.
By setting $\dd := \mm\boe_1 \subset \Z^d$, we get $\Z^d[C] \subset \dd[C]$ and, {\it a fortiori}, $\Z^d[C] = \dd[C]$. Therefore, the digit system $(C,\dd)$ in $\Z^d[C]$ possesses the finiteness property.
\end{proof}

Following Kov{\'a}cs \cite{Kov8, Kov9}, we now define \emph{block-digit systems}. For two vectors $\bv \in \Q^m$, $\bw \in \Q^n$, their \emph{block sum} is defined by $\bv \oplus \bw := (\bv, \bw)^T \in \Q^{m+n}$. Likewise, the block-sum of two matrices $A \in M_m(\Q)$ and $B \in M_n(\Q)$ is a  \emph{block matrix}
\[
A \oplus B = \left(\begin{array}{ll} A & O_{m, n}\\
					  O_{n, m} &  B\\
		\end{array}\right) \in M_{m+n}(\Q).
\]
The block sum respects the addition of vectors in $\Q^{m+n}$ and the multiplication by compatible block-matrices from $M_{m+n}(\Q)$, in particular,
\begin{equation}\label{blockop}
\bv \oplus \bw + \bx \oplus \by = (\bv + \bx) \oplus (\bw + \by), \qquad (A \oplus B) (\bv \oplus \bw) = (A\bv) \oplus (B\bw),
\end{equation} 
for every $\bv, \bx \in \Q^m$, $\bw, \by \in \Q^n$.
Let $\dd_A\subset \Z^m$ and $\dd_B\subset \Z^n$ be finite sets containing the respective zero vectors $\bnull_m \in \Z^m$ and $\bnull_n \in \Z^n$. The presence of  $\bnull_m \in \dd_A$ and $\bnull_n \in \dd_B$ allows assembling the radix representations of different length to obtain
\begin{equation}\label{blocksumNS}
\dd_A[A] \oplus \dd_B[B] = (\dd_A \oplus \dd_B)[A \oplus B] 
\end{equation} using the block-sum properties in \eqref{blockop}. Properties \eqref{blockop} and \eqref{blocksumNS}, combined with the projections into the first $m$ or the last $n$ coordinates, implies the following proposition.
\begin{proposition}\label{propBlock}
Let $A \in M_m(\Q)$ and $B \in  M_n(\Q)$. Suppose that $\bnull_m \in \dd_A \subset \Z^m$ and $\bnull_n \subset \dd_B \in \Z^n$. Then $(A \oplus B, \dd_A \oplus \dd_B)$ is a digit system with finiteness property in the lattice $\Z^{m+n}[A \oplus B]$ if and only if digit systems $(A, \dd_A)$ and $(B, \dd_B)$ in $\Z^{m}[A]$ and $\Z^{n}[B]$, respectively, both have the finiteness property.
\end{proposition}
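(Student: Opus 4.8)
The plan is to reduce the proposition to two structural identities for the block sum and then to peel the two factors apart with coordinate projections. First I would observe that the ambient module itself splits,
\[
\Z^{m+n}[A \oplus B] = \Z^m[A] \oplus \Z^n[B].
\]
This is a consequence of the definition \eqref{defZA} together with the block identities \eqref{blockop}: because $(A\oplus B)^j = A^j \oplus B^j$, every partial sum $\sum_{j=0}^{k-1} (A \oplus B)^j \Z^{m+n}$ equals $\bigl(\sum_{j=0}^{k-1} A^j \Z^m\bigr) \oplus \bigl(\sum_{j=0}^{k-1} B^j \Z^n\bigr)$, and taking the union over $k$ commutes with the block sum because these partial sums grow monotonically in $k$ (given $\bx \oplus \by$ with $\bx$ captured at level $k_1$ and $\by$ at level $k_2$, the level $\max(k_1,k_2)$ captures both). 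This interchange of the union with the block sum is the only place where a little care is needed.

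With this identity and \eqref{blocksumNS} the finiteness property of $(A \oplus B, \dd_A \oplus \dd_B)$ becomes the single equation
\[
\dd_A[A] \oplus \dd_B[B] = \Z^m[A] \oplus \Z^n[B].
\]
The backward implication is then immediate: if $\dd_A[A] = \Z^m[A]$ and $\dd_B[B] = \Z^n[B]$, the two block sums coincide factor by factor. For the forward implication I would apply the projections $\pi_1 \colon \bx \oplus \by \mapsto \bx$ and $\pi_2 \colon \bx \oplus \by \mapsto \by$ onto the first $m$ and the last $n$ coordinates. Since $\dd_B[B]$ and $\Z^n[B]$ are both nonempty — here the hypothesis $\bnull_n \in \dd_B$ enters, as it gives $\bnull_n \in \dd_B[B]$ — one has $\pi_1(\dd_A[A] \oplus \dd_B[B]) = \dd_A[A]$ and $\pi_1(\Z^m[A] \oplus \Z^n[B]) = \Z^m[A]$, so applying $\pi_1$ to the displayed equation yields $\dd_A[A] = \Z^m[A]$. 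Symmetrically, $\pi_2$ together with $\bnull_m \in \dd_A$ gives $\dd_B[B] = \Z^n[B]$, which is the finiteness property for the two factors.

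I do not expect a genuine obstacle: once \eqref{blockop} and \eqref{blocksumNS} are available, the argument is pure bookkeeping, in agreement with the remark preceding the statement. The one point that must be handled with care is the role of the zero digits $\bnull_m$ and $\bnull_n$, which are used twice — once (in deriving \eqref{blocksumNS}) to pad radix representations of different lengths so that the block sum of the representable sets equals the representable set of the block sum, and once in the projection step to keep the complementary factor nonempty, so that the image of a block sum under $\pi_1$ or $\pi_2$ is exactly one factor rather than a proper subset of it.
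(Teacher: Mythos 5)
Your proof is correct and takes essentially the same route the paper itself sketches: it rests on the identities \eqref{blockop} and \eqref{blocksumNS} combined with the projections onto the first $m$ and the last $n$ coordinates, exactly as stated in the sentence preceding the proposition. Your only additions are to spell out the splitting $\Z^{m+n}[A\oplus B]=\Z^m[A]\oplus\Z^n[B]$ (via the monotone union in \eqref{defZA}) and the two distinct uses of the zero digits, both of which the paper leaves implicit.
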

Finally we are ready to prove the main theorem of the paper.

\begin{proof}[Proof of Theorem \ref{thmMain}] First, following Gilbert \cite{Gi2} or Vince \cite{Vin2}, we will show that $(A, \dd)$ is never a digit system with finiteness property if $A$ has an eigenvalue $\la$ of absolute value $|\la| < 1$.
Indeed, assume that $\Z^n[A]=\dd[A]$, so that a $\bz \in \Z^n[A]$ can be written $\bz = \bd_0 + \dots + A^k \bd_k$, $k \in \N$, $\bd_j \in \dd$.
Let $\bv \ne 0 \in \C^n$ satisfy $\bv^TA = \la \bv^T$ with $|\la| < 1$. Then
\[
\bv^T \bz = \bv^T\bd_0 + \lambda \bv^T\bd_1 + \dots + \lambda^k \bv^T\bd_k.
\]
Since $\dd$ is finite, there exists $C > 0$, such that  $\abs{\bv^T\bd} < C$ for every $\bd \in \dd$. Hence, $\abs{\la} < 1$ yields
\[
\abs{\bv^T\bz} < C(1 + \abs{\la} + \dots + \abs{\la}^k)< C(1-\abs{\la})^{-1}.
\] This is not possible, since $\Z^n[A]$, in particular, contains the lattice $\Z^n$ and its image under the linear mapping $\bz \mapsto \bv^T \bz$ cannot be bounded for $\bv \ne \bnull$.

Thus, it suffices to consider the case when all the eigenvalues of $A$ are greater than or equal to $1$ in absolute value. By the Primary Decomposition Theorem (\cite[Chapter~XI, Theorem~4.1]{Lang1}) and the Cyclic Subspace Decomposition Theorem (\cite[Chapter~XIV, Theorem~2.1]{Lang2}), there exists a nondegenerate $n \times n$ rational matrix $T$, such that $A = TBT^{-1}$, where $B$ is the block-diagonal Frobenius form
\[
B = \left(\begin{array}{lll}
		B_1 & \dots       & O_{n_1, n_k}\\
		\vdots              & \ddots     & \vdots \\ 
		O_{n_k, n_1}                    & \dots  & B_k\\ 
	\end{array}\right).
\]
Each block $B_j := C(P_j^{m_j})$ of the size $n_j \times n_j$ on the main diagonal of $B$ is the companion matrix of the $m_j$-th power of the irreducible polynomial $P_j\in \Z[x]$ that divides the characteristic polynomial of $A$ in $\Q[x]$. For every integer $c \ne 0$, $TBT^{-1}=(cT)B(cT)^{-1}$, therefore we may assume that $T \in M_n(\Z)$.  Since $A$ has no eigenvalue $|\la| < 1$, Lemma \ref{mainIrred} yields that, for each $B_j$, there exists a finite set $\dd_j \in \Z^{n_j}$, such that $(B_j, \dd_j)$ is a digit system with finiteness property in the module $\Z^{n_j}[B_j]$. We may assume that $\bnull_{n_j} \in \dd_j$ (or append it to each digit set $\dd_j$). By Proposition \ref{propBlock}, the block-sum
\[
\dd' := \dd_1 \oplus \dots \oplus \dd_k
\] 
makes a digit set for the digit system  with finiteness property $(B, \dd')$ in the module $\Z^n[B]$. By conjugating with $T$, we obtain that $(A, T\dd')$ is a digit system  with finiteness property in the module $(T\Z^n)[A]$. Since $T\Z^n$ has finite index in $\Z^n$, by adding all coset representatives from $\dd'':=\Z^n/T\Z^n$ to the digit set $\dd'$, one obtains the digit system $(A, \dd' + \dd'')$ with finiteness property in the module $\Z^n[A]$.
\end{proof}

We end our paper by posing two open problems.

\begin{problem}\label{prob1}
Suppose an  $n \times n$ matrix $A \in M_n(\Q)$ with rational entries has eigenvalues $\la$ with $\abs{\la} \ge 1$, and that at least one eigenvalue is of absolute value $\abs{\la}=1$. Is it true that  a digit system $(A, \dd)$ in $\Z^n[A]$ cannot admit unique representations?   
\end{problem}

In relation to Problem~\ref{prob1}, Vince \cite{Vin2} proved (see \cite[Proposition~4]{Vin2}), that when the entries of $A$ are integers, there exists no digit system $(A,\dd)$ in $\Z^n$ with a zero digit $0 \in \dd$ satisfying the unique representation property. He notes that the eigenvalues $\la$ with $\abs{\la}=1$ of an integer matrix $A$ are roots of unity; thus, there must be a non-zero vector $\bv \in \Z^n$, such that $A^m \bv = \bv$ for some integer $m \in \N$. By uniqueness of the representation, $\bv$ has the radix representation with all digits equal to $\bnull$, which contradicts $\bv \ne \bnull$. However, in case $A \in M_n(\Q)$ has rational entries, eigenvalues of $A$ on the unit circle, in general, need not be only the roots of unity. Problem~\ref{prob1} seems to be  much more interesting in this setting. In this context we refer to \cite{MR1657992}, where an interesting characterization of the polynomials in $\R[x]$ having all their roots on the unit circle is given and to \cite{Brunotte-Kirschenhofer-Thuswaldner:12} where Problem~\ref{prob1} is addressed in the context of so-called \emph{shift radix systems}.

\begin{problem}\label{prob2}
Suppose that $A \in M_n(\Q)$ satisfies the assumptions of Problem \ref{prob1}, and that  $(A, \dd)$ in $\Z^n[A]$ has the finiteness property. What is the smallest possible size $\#\dd$ of the digit set?
\end{problem}

Regarding Problem \ref{prob2}, we know that in the case where the characteristic polynomial of $A$ is irreducible over $\Q$ and all the eigenvalues are of absolute value $1$, the construction in the proof of Theorem~\ref{ATZ} provided in \cite{AkThZa} yields only a very rough bound for $\#\dd$ that is surely far from the optimum.

\bibliographystyle{abbrv}
\bibliography{eigen1}

\end{document}